\newcommand{\proof}{{\em Proof.}\quad}
\renewcommand{\remark}{{\noindent\em Remark}\quad}
\begin{document}

\tittle{Fixed points, bounded orbits and attractors of planar flows}

\def\authors{\aaa{H\'ector  B{\surname ARGE}
\hspace{-1mm}%\thanks{Author partially supported by project ... .}
\ and  Jos\'e M.R.  S{\surname ANJURJO}
\hspace{-1mm}\thanks{The authors are partially supported by project MTM2012-30719.}
}}

\def\direc{\address{
Departamento de Geometr\'ia y Topolog\'ia\\
Universidad Complutense de Madrid \\
28040 Madrid, Spain \\
hbarge@ucm.es \\ \vskip .2cm
Departmento de Geometr\'ia y Topolog\'ia \\
Universidad Complutense of Madrid \\
28040 Madrid, Spain\\
jose\_sanjurjo@mat.ucm.es}}

\maketitle

\begin{center}
{\em Affectionately dedicated to Jos\'e Mar\'ia Montesinos-Amilibia.}
\end{center}

\vspace{0.1cm}

\begin{abstract}
\noindent In this paper we provide a dynamical characterization of isolated invariant continua which are global attractors for planar dissipative flows. As a consequence, a sufficient condition for an isolated invariant continuum to be either an attractor or a repeller is derived for general planar flows. 
\end{abstract}

\MSC{{34C35, 34D23, 37C10, 37C25, 37C70.}}  %Mathematics Subject classification.

\keywords{{Attractor, Fixed point, Bounded orbit, Dissipative flow, 
Isolated invariant set}.}
\par
\bigskip

\section{Introduction}

In this paper we are concerned with the study of planar flows $\varphi:\mathbb{R}^{2}\rightarrow \mathbb{R}^{2}.$ In particular, we provide a
dynamical characterization of isolated invariant continua which are global attractors for planar dissipative flows. This characterization is inspired
by a result of Alarc\'{o}n, Gu\'{\i}\~{n}ez and Guti\'{e}rrez about dissipative planar embeddings with only one fixed point (see \cite{Alarcon}). Moreover we will derive a sufficient condition for a planar continuum to be an attractor or a repeller provided that it contains all the fixed points of $\varphi$.

We shall use through the paper the standard notation and terminology in the theory of dynamical systems. In particular we shall use the notation $\gamma(x)$ for the \emph{trajectory} of the point $x$, i.e. $\gamma(x)=\{xt\mid t\in\mathbb{R}\}$. By the \emph{omega-limit} of a point $x$ we understand the set $\omega(x)=\bigcap_{t>0}\overline{x[t,\infty)}$ while the \emph{negative omega-limit} is the set $\omega^*(x)=\bigcap_{t<0}\overline{x(-\infty,t]}$. An invariant compactum $K$ is \emph{stable} if every neighborhood $U$ of $K$ contains a neighborhood $V$ of $K$ such that $V[0,\infty)\subset U$. Similarly, $K$ is \emph{negatively stable} if every neighborhood $U$ of $K$ contains a neighborhood $V$ of $K$ such that $V(-\infty,0]\subset U$. An invariant compactum $K$ is said to be \emph{attracting} provided that there exists a neighborhood $U$ of $K$ such that $\omega(x)\subset K$ for every $x\in U$. In an analogous way, $K$ is said to be \emph{repelling} provided that there exists a neighborhood $U$ of $K$ such that $\omega^*(x)\subset K$ for every $x\in U$. An \emph{attractor} (or \emph{asymptotically stable} compactum) is an attracting stable set and a \emph{repeller} is a repelling negatively stable set. 

If $K$ is an attracting set, its region (or basin) of attraction $\mathcal{A}$ is the set of all points $x\in M$ such that $\omega(x)\subset K$. An attracting set $K$ is \emph{globally attracting} provided that $\mathcal{A}$ is the whole phase space. If $K$ is an attractor and $\mathcal{A}$ is the whole phase space, then $K$ is said to be a \emph{global attractor} (or \emph{globally asymptotically stable} compactum). For the reader interested in a detailed treatment of attracting sets we recommend \cite{Moron} and \cite{Sanchez-Gabites Transactions}.

Through this paper we shall deal with a special kind of invariant compacta, the so-called \emph{isolated invariant sets} (see \cite{Conley,Conley-Easton,Easton,Salamon} for details). These are compact invariant sets $K$ which possess an \emph{isolating neighborhood}, i.e. a compact neighborhood $N$  such that $K$ is the maximal invariant set in $N$. For instance, attractors and repellers are isolated invariant sets. We shall make use of the next result which states that isolated globally attracting continua for planar flows are stable.

\begin{theorem}[Mor\'on, S\'anchez-Gabites and Sanjurjo  \cite{Moron}\label{thm1}]
Every connected \\ isolated globally attracting set $K$ in $\mathbb{R}^2$ is a global attractor.
\end{theorem}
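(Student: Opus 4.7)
The plan is to reduce the statement to proving that $K$ is stable, since \emph{attracting} together with \emph{stable} is precisely the definition of \emph{attractor}, and the basin is already hypothesised to be the whole plane.

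First I would fix the setup. Using that $K$ is isolated, choose a compact isolating neighbourhood $N$ with $K = \mathrm{Inv}(N)$ the maximal invariant set, and pick $\varepsilon>0$ so small that $\overline{B_\varepsilon(K)}\subset\mathrm{int}(N)$. Arguing by contradiction, suppose $K$ is not stable. Then there exist points $y_n\to K$ and first-exit times $t_n>0$ such that $d(y_n t_n,K)=\varepsilon$ while $d(y_n s,K)<\varepsilon$ for every $s\in[0,t_n)$. Passing to a subsequence, $y_n t_n\to z$ with $d(z,K)=\varepsilon$, so $z\notin K$. For each fixed $s>0$, continuous dependence gives $z(-s)=\lim_n y_n(t_n-s)\in\overline{B_\varepsilon(K)}\subset N$; hence the entire negative semi-orbit of $z$ lies in $N$ and $\alpha(z)\subset\mathrm{Inv}(N)=K$. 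The global attraction hypothesis yields $\omega(z)\subset K$. In particular $\gamma(z)$ is bounded and $M:=K\cup\gamma(z)$ is a compact connected subset of $\mathbb{R}^2$ (connectedness follows because $\gamma(z)$ is connected, $K$ is connected, and the closure of $\gamma(z)$ meets $K$ at the nonempty set $\alpha(z)\cup\omega(z)$), while $\gamma(z)\cap K=\emptyset$.

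The main obstacle comes next: deriving a contradiction from the planar topology of $M$. Exploiting planarity and the connectedness of $K$, I would exhibit a bounded component $W$ of $\mathbb{R}^2\setminus M$ whose closure is simply connected (disk-like). This $W$ is flow-invariant, since its boundary $\partial W\subset M$ consists of the continuum $K$ and the orbit $\gamma(z)$, both flow-invariant and neither crossable by a trajectory. A Poincar\'e--Hopf argument on $\overline W$ (which has Euler characteristic $1$) then forces the flow to have a rest point $p\in\overline W$; arranging matters so that one such $p$ lies in $W$ itself gives $p\notin K$ (because $W\cap M=\emptyset$), yet $\omega(p)=\{p\}\not\subset K$, contradicting global attraction.

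The delicate point, which I expect to be where the real work lies, is securing a suitable bounded simply-connected complementary component $W$. This will require a careful analysis of how $\gamma(z)$ approaches the continuum $K$ in the plane and, if $M$ itself is not well-behaved enough, replacing $\gamma(z)$ by a Jordan curve obtained by joining $\omega(z)$ to $\alpha(z)$ through an arc or subcontinuum inside $K$, using the connectedness of $K$ and planar separation theorems. Once such a Jordan curve is available, the enclosed disk plays the role of $W$ in the argument above, and the contradiction closes.
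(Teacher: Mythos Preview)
The paper does not prove this statement: Theorem~\ref{thm1} is quoted from \cite{Moron} without proof and is invoked only as an auxiliary tool at the end of the proof of Theorem~\ref{thm3}. There is therefore no argument in the present paper to compare your proposal against.

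On the proposal itself: the first half---the standard instability argument producing $z\notin K$ with $\alpha(z)\cup\omega(z)\subset K$, hence a compact connected set $M=K\cup\overline{\gamma(z)}$---is correct and routine. The second half, however, where the contradiction is to be extracted from planar topology, is left as a plan rather than an argument, and you yourself flag it as ``the delicate point''. Two things are not yet secured. First, the existence of a bounded component $W$ of $\mathbb{R}^2\setminus M$ whose closure is a topological disk is not automatic when $K$ is an arbitrary continuum and $\alpha(z),\omega(z)$ are arbitrary subcontinua of $K$; the Jordan-curve replacement you sketch (``joining $\omega(z)$ to $\alpha(z)$ through an arc or subcontinuum inside $K$'') requires a genuine construction, and an arbitrary plane continuum need not contain arcs between prescribed subsets. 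Second, even granting an invariant topological disk $\overline{W}$, a Poincar\'e--Hopf or Brouwer argument yields a rest point only in $\overline{W}$, possibly on $\partial W\subset M$, where it could lie in $K$; your clause ``arranging matters so that one such $p$ lies in $W$ itself'' is precisely the step that needs justification, not a side remark. As it stands the proposal is a reasonable outline of a strategy, but the part carrying the actual content has not been carried out.
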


 A special kind of isolating neighborhoods shall be useful in the sequel, the so-called \emph{isolating blocks}, which have good topological properties. More precisely, an isolating block $N$ is an isolating neighborhood such that there are compact sets $N^i,N^o\subset\partial N$, called the entrance and the exit sets, satisfying
\begin{enumerate}
\item $\partial N=N^i\cup N^o$;
\item for each $x\in N^i$ there exists $\varepsilon>0$ such that $x[-\varepsilon,0)\subset M-N$ and for each $x\in N^o$ there exists $\delta>0$ such that $x(0,\delta]\subset M-N$;
\item for each $x\in\partial N-N^i$ there exists $\varepsilon>0$ such that $x[-\varepsilon,0)\subset \mathring{N}$ and for every $x\in\partial N-N^o$ there exists $\delta>0$ such that $x(0,\delta]\subset\mathring{N}$.
\end{enumerate} 

These blocks form a neighborhood basis of $K$ in $M$. If the flow is {diffe\-ren\-tia\-ble}, the isolating blocks can be chosen to be differentiable manifolds which contain $N^i$ and $N^o$ as submanifolds of their boundaries and such that $\partial N^i=\partial N^o=N^i\cap N^o$. In particular, for flows defined on $\mathbb{R}^2$, the exit set $N^o$ is a disjoint union of a finite number of intervals $J_1,\ldots,J_m $ and circumpherences $C_1,\ldots,C_n$ and the same is true for the entrance set $N^i$.

The dynamical structure near isolating invariant sets shall play an important role in this paper and it is described by the

\begin{theorem}[Ura-Kimura-Egawa \cite{Ura-Kimura, Egawa}]
Let $M$ be a locally compact separable metric space and $\varphi$ a flow on $M$. Suppose $K\neq M$ is a non-empty isolated invariant compactum. Then, one and only one of the  following alternatives holds:
\begin{enumerate}
\item $K$ is an attractor;
\item $K$ is a repeller;
\item There exist points $x\in M-K$ and $y\in M-K$ such that $\emptyset\neq\omega(x)\subset K$ and $\emptyset\neq\omega^*(y)\subset K$. 
\end{enumerate}
\end{theorem}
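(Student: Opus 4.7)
The plan is to use the isolating block machinery from the preceding discussion. Fix an isolating block $N$ of $K$, with entrance set $N^i$ and exit set $N^o$, and introduce the trapped sets
\[
A^+ = \{x \in N \mid x[0,\infty) \subset N\}, \qquad A^- = \{x \in N \mid x(-\infty,0] \subset N\}.
\]
First I would verify the basic properties: both sets are closed in $N$, contain $K$, and satisfy $A^+ \cap A^- = K$ (any orbit entirely in $N$ is invariant, hence lies in the maximal invariant set). Moreover, for $x \in A^+$ the set $\omega(x)$ is invariant and contained in $N$, so $\omega(x) \subset K$; symmetrically $\omega^*(x) \subset K$ for $x \in A^-$.

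Next I would split into cases. If $A^+ \supsetneq K$ and $A^- \supsetneq K$, then any $x \in A^+\setminus K$ and $y \in A^-\setminus K$ directly witness alternative 3. So the remaining task is to show that $A^+ = K$ forces $K$ to be a repeller (and, by symmetry, $A^- = K$ forces $K$ to be an attractor).

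This is the main obstacle. Assuming $A^+ = K$, the goal is to produce a neighborhood $V$ of $K$ with $V \subset A^-$; granted this, every $y \in V$ satisfies $\omega^*(y) \subset K$, so $K$ is repelling, and repeating the construction on isolating blocks shrinking to $K$ gives negative stability. To obtain $V \subset A^-$ I would argue by contradiction: if $y_n \to K$ with $y_n \notin A^-$, let $\sigma_n < 0$ be the backward exit time and set $z_n := y_n\sigma_n \in N^i$. Since $K \subset \mathring{N}$, we have $K \cap \partial N = \emptyset$; if $\sigma_n$ stayed bounded, a subsequential limit would place a point of $K$ on $N^i$, a contradiction. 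Hence $\sigma_n \to -\infty$, and then $z_n[0,-\sigma_n] \subset N$ with $-\sigma_n \to \infty$. By compactness of $N^i$ a subsequence $z_{n_k} \to z \in N^i$, and continuity of the flow together with closedness of $N$ gives $z[0,\infty) \subset N$, so $z \in A^+ = K$, once more contradicting $K \cap \partial N = \emptyset$.

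Finally, the mutual exclusivity of the three alternatives reduces to a short stability argument: if $K$ were an attractor and $\omega^*(y) \subset K$ for some $y \notin K$, then for any neighborhood $U$ of $K$ stability produces a neighborhood $V$ of $K$ with $V[0,\infty) \subset U$; since $yt \in V$ for some $t_0 < 0$, the forward orbit of $yt_0$ lies in $U$, so $y \in U$. Letting $U$ shrink forces $y \in K$, a contradiction. The time-reversed argument rules out the coincidence of a repeller with alternative 3, and the two arguments together exclude the simultaneous occurrence of 1 and 2 outside the trivial case where $K$ is a clopen component of $M$.
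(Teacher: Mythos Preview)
The paper does not prove this theorem: it is quoted from Ura--Kimura and Egawa as a background tool, with no argument supplied. So there is nothing in the paper to compare your proof against.

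That said, your approach via the forward and backward trapped sets $A^{\pm}$ in an isolating block is the standard modern route to this result, and the argument is correct. A few places are worth tightening. First, when you introduce the backward exit time $\sigma_n$, you should note that $y_n\to K\subset\mathring N$ forces $y_n\in\mathring N$ eventually, so $\sigma_n<0$ is well defined and $z_n=y_n\sigma_n$ indeed lands on $N^i$ by the block axioms. Second, the sentence ``repeating the construction on isolating blocks shrinking to $K$ gives negative stability'' deserves one extra line: if $N'\subset N$ is a smaller isolating block then $A^+(N')\subset A^+(N)=K$, hence $A^+(N')=K$ as well, and the same contradiction argument run inside $N'$ produces a neighborhood $V'$ of $K$ with $V'(-\infty,0]\subset N'\subset U$. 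Since isolating blocks form a neighborhood basis of $K$, this yields negative stability. Third, your caveat about the clopen case in the mutual-exclusivity part is accurate; the ``only one'' clause in the statement implicitly uses that $K$ is not open in $M$, which is automatic whenever $M$ is connected (and in particular in all the planar applications of this paper).
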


We shall also make use of a classical result of C. Guti\'{e}rrez about smoothing of $2$-dimensional flows.

\begin{theorem}[Guti\'{e}rrez \cite{Gutierrez}] Let $\varphi :M\times \mathbb{R}\rightarrow M$ be a continuous flow on a compact $C^{\infty }$ two-manifold $M$. Then there
exists a $C^{1}$ flow $\psi $ on $M$ which is topologically equivalent to $\varphi $. Furthermore, the following conditions are equivalent:
\begin{enumerate}
\item any minimal set of $\varphi $ is trivial;

\item $\varphi $ is topologically equivalent to a $C^{2}$ flow;

\item $\varphi $ is topologically equivalent to a $C^{\infty }$ flow.
\end{enumerate}
\end{theorem}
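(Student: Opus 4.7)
The plan is to split the statement into the construction of a $C^1$ topologically equivalent flow and the three-way equivalence of $(1)$, $(2)$, $(3)$. For the first part, I would work locally and then glue. Cover the compact surface $M$ by finitely many charts and analyze the local structure of $\varphi$ in each: at regular points the classical flow-box rectification provides a continuous straightening, while at an isolated fixed point the Bendixson-type sector decomposition (elliptic, hyperbolic, parabolic) classifies the local phase portrait up to topological equivalence. In each chart substitute a $C^1$ model with the same topological type, and then assemble a global $C^1$ flow by patching the local vector fields through a partition of unity adapted to the flow direction, reparameterizing time as needed. The main delicacy here is guaranteeing that the partition of unity does not introduce spurious singularities in the overlaps and that the local topological identifications are preserved by the patching.

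For the equivalence, $(3) \Rightarrow (2)$ is immediate. For $(2) \Rightarrow (1)$ I would invoke the classical Denjoy theorem together with its extension by A.J.~Schwartz to arbitrary compact surfaces: a $C^2$ vector field cannot support an exceptional minimal set, so every minimal set is either a fixed point, a periodic orbit, or the whole surface, i.e.\ trivial in the sense relevant here. The core of that argument is a bounded-distortion estimate for the first-return map to a transverse section, an estimate that genuinely requires the $C^2$ hypothesis; indeed the Denjoy counterexamples on $\mathbb{T}^2$ show that it fails in regularity $C^1$.

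The main obstacle is $(1) \Rightarrow (3)$. The strategy is to use the Markus--Neumann structural theory for continuous flows on compact surfaces: once every minimal set is trivial, the non-wandering set decomposes into finitely many fixed points and periodic orbits, and the complement splits as a finite union of canonical regions (parallel, annular, spiral) each of which is conjugate to an explicit $C^{\infty}$ model. Region-by-region smoothing is then transparent; the real technical work is coherent gluing along the separatrix skeleton and across the periodic orbits. Concretely, near each fixed point and each periodic orbit one has to build a single local $C^{\infty}$ model compatible simultaneously with every adjacent canonical region, and then verify that the resulting global $C^{\infty}$ flow remains topologically equivalent to $\varphi$. This simultaneous local compatibility, together with the careful choice of reparameterization across the separatrix boundaries, is the crux of Guti\'errez's original argument and is where I would expect to spend the bulk of the proof.
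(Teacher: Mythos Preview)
The paper does not prove this theorem at all: it is quoted verbatim from Guti\'errez's 1986 paper \cite{Gutierrez} and used only as a black box (to reduce continuous planar flows to $C^\infty$ ones via the one-point compactification). There is therefore no ``paper's own proof'' to compare your sketch against.

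That said, as an outline of Guti\'errez's result your sketch has genuine gaps. For the $C^1$ step you rely on a Bendixson sector decomposition at fixed points and flow-box charts elsewhere, but the hypothesis is merely that $\varphi$ is continuous on a compact surface: the fixed-point set can be any closed subset (a Cantor set, an arc, a disk), so isolated-singularity models are unavailable, and a partition-of-unity patching of local $C^1$ vector fields across topological conjugacies does not make sense, since the transition maps are only homeomorphisms. Guti\'errez's actual $C^1$ argument proceeds instead through the one-dimensional foliation by orbits and Whitney-type smoothing of that foliation, not by chart-by-chart replacement of vector fields.

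For $(1)\Rightarrow(3)$ your key structural claim is false: ``every minimal set is trivial'' does \emph{not} force the non-wandering set to consist of finitely many fixed points and periodic orbits. A center surrounded by a continuum of closed orbits, or a flow with an arc of equilibria, already violates this while having only trivial minimal sets. Consequently the Markus--Neumann canonical-region decomposition, which presupposes finitely many singularities and separatrices, is not directly applicable. Guti\'errez handles this direction by exploiting the absence of nontrivially recurrent orbits to control the return maps on transverse sections and then smoothing; the finiteness you assume is simply not there. The implication $(2)\Rightarrow(1)$ via Denjoy--Schwartz is the one part of your outline that matches the standard argument.
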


By a trivial minimal set we understand a fixed point, a closed trajectory or the whole manifold if $M$ is the $2$-dimensional torus and $\varphi$ is topologically equivalent to an irrational flow. We readily deduce from Guti\'{e}rrez' Theorem applied to the Alexandrov compactification of the plane that continuous flows $\varphi :\mathbb{R}^{2}\times \mathbb{R}\rightarrow \mathbb{R}^{2}$ are topologically equivalent to $C^{\infty }$ flows.

Some basic results about planar vector fields such as, the Poincar\'e-Bendixson Theorem, the Tubular Flow Theorem and the elementary properties of transversal sections shall be used through the paper. Two good {re\-fe\-ren\-ces} covering this material are the book of Hirsch, Smale and Devaney \cite{Hirsch} and the monograph of Palis and de Melo \cite{Palis}. In addition, a form of homotopy theory, namely \emph{shape theory}, which is the most {sui\-ta\-ble} for the study of global topological properties in dynamics, will be {occa\-sio\-na\-lly used}. Although shape theory is not necessary to understand the paper, we {re\-co\-mmend} to the reader the references \cite{Borsuk}, which contains an exhaustive treatment of the subject and \cite{Sanjurjo}, which covers some dynamical applications of this theory. 
 
\section{Planar dissipative systems and isolated {in\-va\-riant} continua}

We start this section by recalling the definition of \emph{dissipative flow}. Let $M$ be a locally compact metric space and $\varphi:M\times\mathbb{R}\to M$ a flow on $M$. The flow $\varphi$ is said to be \emph{dissipative} if $\omega(x)\neq\emptyset$  for every $x\in M$ and $\bigcup_{x\in M}\omega(x)$ has compact closure. If the phase space $M$ is not compact, dissipativeness is equivalent to $\{\infty\}$ being a repeller of the extended flow $\widehat{\varphi}:(M\cup\{\infty\})\times\mathbb{R}\to M\cup\{\infty\}$ to the Alexandrov compactification of $M$ leaving $\infty$ fixed (See\cite{Garay,Hale,Sanjurjo}), and therefore to the existence of a global attractor for $\varphi$.

 The following result gives a relation between global asymptotic stability of a fixed point and the non-existence of {addi\-tio\-nal} fixed points in the case of discrete dynamical systems.   

\begin{theorem}[Alarc\'on-Gu\'i\~nez-Guti\'errez \cite{Alarcon}, Ortega-Ruiz del Portal \cite{Ortega 2}\label{thm2}]
Assume that $h\in \mathcal{H}_+$ (orientation preserving homeomorphisms of $\mathbb{R}^2$) is dissipative and $p$ is an asymptotically stable fixed point of $h$. The following conditions are {equi\-va\-lent}:
\begin{enumerate}
\item $p$ is globally asymptotically stable;

\item $Fix(h)=p$ and there exists an arc $\gamma\subset S^2$ with end points at $p$ and $\infty$ such that $h(\gamma)=\gamma$.
\end{enumerate}
\end{theorem}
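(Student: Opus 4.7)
The plan is to address the two implications separately. The substantive content of $(1)\Rightarrow(2)$ is the construction of the invariant arc, while $(2)\Rightarrow(1)$ reduces cleanly to Brouwer's plane translation theorem together with a one-dimensional analysis on $\gamma$.

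For $(1)\Rightarrow(2)$: global attraction immediately forces $\mathrm{Fix}(h)=\{p\}$, since any other fixed point would give a constant orbit not converging to $p$. To produce the invariant arc, I would extend $h$ to a homeomorphism $\widehat{h}:S^{2}\to S^{2}$ with $\widehat{h}(\infty)=\infty$. Dissipativeness makes $\{\infty\}$ a repeller of $\widehat{h}$ and, by hypothesis, $\{p\}$ is its global attractor, so $S^{2}\setminus\{p,\infty\}$ is a topological annulus on which $\widehat{h}$ acts without fixed points. My strategy would be to build $\gamma$ by gluing three pieces: a small forward-invariant arc inside a fundamental attractor neighborhood of $p$, a small backward-invariant arc inside a fundamental repeller neighborhood of $\infty$, and a connecting orbit-arc running between them. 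The main obstacle is ensuring the construction yields an embedded Jordan arc (rather than a set with branches or self-intersections) for which $\widehat{h}(\gamma)=\gamma$ holds globally; this delicate step is where orientation preservation and Brouwer-type arguments on the annulus $S^{2}\setminus\{p,\infty\}$ become essential.

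For $(2)\Rightarrow(1)$: assume $\mathrm{Fix}(h)=\{p\}$ and that the invariant arc $\gamma$ exists. The extension $\widehat{h}:S^{2}\to S^{2}$ satisfies $\mathrm{Fix}(\widehat{h})=\{p,\infty\}$ and $\widehat{h}(\gamma)=\gamma$. The complement $D=S^{2}\setminus\gamma$ is a topological open disk, and $\widehat{h}|_{D}$ is an orientation-preserving fixed-point-free self-homeomorphism of $D\cong\mathbb{R}^{2}$. Brouwer's plane translation theorem then implies that every compact subset of $D$ is eventually displaced off itself by iteration, so $\omega(x)\cap D=\emptyset$ for each $x\in D$, giving $\omega(x)\subset\gamma$ for each $x\in\mathbb{R}^{2}\setminus\gamma$. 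Parametrizing $\gamma\cong[0,1]$ with $0\mapsto\infty$ and $1\mapsto p$, the restriction $\widehat{h}|_{\gamma}$ is a homeomorphism of $[0,1]$ fixing precisely the endpoints, which is therefore strictly increasing; asymptotic stability of $p$ forces $\widehat{h}|_{\gamma}(t)>t$ on $(0,1)$ (otherwise points on $\gamma$ close to $p$ would be pushed away), so every orbit in $\gamma\setminus\{\infty\}$ converges to $p$. Combining, $\omega(x)=\{p\}$ for every $x\in\mathbb{R}^{2}$, which gives global asymptotic stability.
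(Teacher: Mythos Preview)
The paper does not actually prove this theorem: it is quoted as a result from \cite{Alarcon} and \cite{Ortega 2}, and the only information given is that the proof in \cite{Alarcon} uses Brouwer's theory of fixed-point-free planar homeomorphisms while \cite{Ortega 2} proceeds via prime ends. So there is no detailed argument in the paper to compare against; what can be compared is whether your outline matches one of these two strategies.

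Your treatment of $(2)\Rightarrow(1)$ is in the spirit of the Brouwer-theory approach attributed to \cite{Alarcon} and is essentially sound: cutting $S^{2}$ along the invariant arc $\gamma$ produces an open disk on which $\widehat{h}$ is an orientation-preserving fixed-point-free homeomorphism, Brouwer then forces every $\omega$-limit set into $\gamma$, and the one-dimensional monotone dynamics on $\gamma$ (together with compactness and full invariance of $\omega(x)$) pins $\omega(x)$ down to $\{p\}$. One small gap worth closing explicitly is the passage from $\omega(x)\subset\gamma\setminus\{\infty\}$ to $\omega(x)=\{p\}$ for $x\notin\gamma$: you should invoke that $\omega(x)$ is compact and fully invariant, so any point of $\omega(x)$ in the open part of the arc would have backward iterates escaping every compact subarc, a contradiction.

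For $(1)\Rightarrow(2)$ there is a genuine gap. You correctly observe that $\mathrm{Fix}(h)=\{p\}$ is automatic, but the entire substance of this implication is the \emph{construction} of the invariant arc, and your sketch (``glue a forward-invariant arc near $p$, a backward-invariant arc near $\infty$, and a connecting orbit-arc'') does not explain why any of these pieces exist or why the union is an embedded arc with $\widehat{h}(\gamma)=\gamma$. A forward-invariant \emph{arc} near an attracting fixed point of a planar homeomorphism is not something one gets for free; producing it already requires nontrivial input (Brouwer's foliation by translation arcs in \cite{Alarcon}, or the prime-end machinery in \cite{Ortega 2}). As written, this direction is a statement of intent rather than a proof, and it is precisely the step where the cited papers do the real work.
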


The proof in \cite{Alarcon} is based on Brouwer's theory of fixed point free homeomorphisms of the plane. Ortega and Ruiz del Portal give in \cite{Ortega 2} an alternative proof based on the theory of prime ends.

Inspired by Theorem~\ref{thm2}, the authors prove in \cite{Barge} that for continuous and dissipative dynamical systems the result is satisfied even if the fixed point $p$ is substituted by a connected attractor $K$ which contains every fixed point of the flow. We prove in our next result that the asymptotical stability condition can be dropped from the hypothesis, obtaining in this way a simple characterization of global attractors of dissipative planar flows.

\begin{theorem}\label{thm3}
Let $K$ be an isolated invariant continuum of a dissipative flow $\varphi$ in $\mathbb{R}^2$. The following conditions are equivalent:
\begin{enumerate}
\item $K$ is a global attractor;

\item There are no fixed points in $\mathbb{R}^2-K$ and there exists an orbit $\gamma$ connecting $\infty$ and $K$ (i.e. such that $||\gamma(t)||\to \infty$ when $t\to -\infty$ and $\omega(\gamma)\subset K)$.
\end{enumerate}
\end{theorem}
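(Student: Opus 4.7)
The plan is to prove the two implications separately and to use the Ura--Kimura--Egawa trichotomy together with Theorem~\ref{thm1} for the substantive direction.

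For $(1)\Rightarrow(2)$: if $K$ is a global attractor then $\{p\}=\omega(p)\subset K$ for every fixed point $p$, so no fixed point lies outside $K$. To produce the orbit $\gamma$, I extend $\varphi$ to the Alexandrov compactification $S^2$; dissipativity makes $\{\infty\}$ a repeller of the extended flow, so $(K,\{\infty\})$ is an attractor--repeller pair in $S^2$. Any orbit through a point of $\mathbb{R}^2\setminus K$ then has $\omega$-limit in $K$ and $\omega^*$-limit equal to $\{\infty\}$, giving the required $\gamma$.

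For $(2)\Rightarrow(1)$: I invoke the Ura--Kimura--Egawa theorem applied to the isolated invariant set $K$. The set $K$ cannot be a repeller, since the orbit $\gamma$ with $\omega(\gamma)\subset K$ eventually enters and remains in any neighborhood of $K$, contradicting the repelling behavior for the portion of $\gamma$ lying off $K$. The core of the argument is to exclude alternative (c): suppose for contradiction there exists $y\in\mathbb{R}^2\setminus K$ with $\omega^*(y)\subset K$. By dissipativity $\omega(y)$ is non-empty and compact, so the Poincar\'e--Bendixson theorem applies. If $\omega(y)\cap K=\emptyset$ then $\omega(y)$ lies in the fixed-point-free open set $\mathbb{R}^2\setminus K$ and must be a periodic orbit $\Gamma$; the index theorem forces the connected continuum $K$ (containing all fixed points) into the bounded disk $D(\Gamma)$, and then the orbit $\gamma$, running from $\infty$ into $K$, must cross the invariant curve $\Gamma$, a contradiction. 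If $\omega(y)\cap K\neq\emptyset$, I would argue topologically: the orbit of $y$ together with $K$ forms a continuum $L$ which, whenever it separates $\mathbb{R}^2$, bounds an invariant region $D$ with no fixed points and invariant boundary; since $\gamma$ cannot cross the orbit of $y$ and its backward half is unbounded, $\gamma$ lies outside $D$, and a Poincar\'e--Hopf/Poincar\'e--Bendixson analysis on $\bar D$---using Guti\'errez's smoothing theorem to work with a $C^\infty$ representative---rules out periodic orbits and polycycles in $D$ and leads to a contradiction.

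With alternatives (b) and (c) excluded, $K$ is an attractor. By Theorem~\ref{thm1} it suffices to verify $\omega(x)\subset K$ for every $x\in\mathbb{R}^2$ in order to conclude that $K$ is a \emph{global} attractor. Given any $x$, dissipativity makes $\omega(x)$ compact and non-empty; Poincar\'e--Bendixson together with hypothesis (2) forces it to be a fixed point (in $K$), a periodic orbit (contained in $K$ by invariance, else enclosing $K$ and giving the same crossing contradiction with $\gamma$), or a polycycle whose connecting orbits all lie in $K$ (any connecting orbit outside $K$ would be an orbit of the type excluded in (c)). In every case $\omega(x)\subset K$, and Theorem~\ref{thm1} finishes the proof. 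The main obstacle is the sub-case $\omega(y)\cap K\neq\emptyset$ in the exclusion of (c): the topological interplay of the orbit of $y$, the continuum $K$, and the obstructing orbit $\gamma$ requires a careful case analysis, which is where the smoothing theorem of Guti\'errez and delicate index computations enter.
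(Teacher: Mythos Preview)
Your detour through the Ura--Kimura--Egawa trichotomy is both unnecessary and, as you yourself flag, incomplete. The paper does not use that theorem at all for $(2)\Rightarrow(1)$: it goes straight to showing $\omega(x)\subset K$ for every $x\in\mathbb{R}^2$ and then invokes Theorem~\ref{thm1}. Your own final paragraph attempts exactly this, so the whole middle block (excluding repeller, excluding alternative (c)) could be deleted---except that your final paragraph \emph{relies} on the exclusion of (c) to handle the polycycle case. That makes the gap in (c) fatal rather than merely redundant.

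The genuine gap is precisely where you locate it: the case of a regular point $y\in\omega(x)\setminus K$ whose orbit has both ends in $K$. Your sketch (``the orbit of $y$ together with $K$ forms a continuum $L$ which, whenever it separates $\mathbb{R}^2$, \ldots'') is not a proof: you do not say what happens when $L$ does not separate, nor how the Poincar\'e--Hopf analysis on $\bar D$ actually produces a contradiction. The paper's argument here is concrete and quite different from what you propose. Take a local transversal section $I$ through $y$ with $I\cap K=\emptyset$; since $y\in\omega(x)$, the forward orbit of $x$ meets $I$ infinitely often. Two consecutive hits $x_1=xt_1$, $x_2=xt_2$ together with the subarc $J\subset I$ between them form a Jordan curve $C=x[t_1,t_2]\cup J$. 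The bounded component $U$ is positively or negatively invariant, hence contains a fixed point, hence contains $K$ (since $K$ is connected and disjoint from $C$). If $U$ is negatively invariant, the orbit $\gamma$ cannot enter $U$ to reach $K$; if $U$ is positively invariant, one checks that $y\in\omega(\gamma)$, contradicting $\omega(\gamma)\subset K$. This transversal/Jordan-curve mechanism is the missing idea in your proposal.
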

\proof By the Guti\'errez Theorem \cite{Gutierrez} we can assume that the flow $\varphi $ is differentiable. Since $\varphi $ is dissipative, given $x\in \mathbb{R}^{2}$ its $\omega $-limit is non-empty and compact. Moreover, by the Poincar\'{e}-Bendixson Theorem either $\omega (x)$ contains fixed points and, hence, $\omega (x)\cap K\neq \emptyset $ or $\omega (x)$ is a periodic orbit. If $\omega (x)$ is a periodic orbit then $K$ is not contained in its interior
since, in that case, $\gamma$ would meet $\omega (x)$, which is impossible. Therefore, if $\omega (x)$ is not contained in $K$, then $K$ is in the
exterior of $\omega (x)$ and, moreover, $\omega (x)$ being a periodic orbit, there must exist a fixed point in its interior. Hence this point belongs to $
K$, which is a contradiction$.$ We conclude that if $\omega (x)$ is a periodic orbit then $\omega (x)\subset K.$

If $\omega (x)$ is not a periodic orbit then $\omega (x)\cap K\neq \emptyset$ and we shall prove that, in fact, $\omega (x)\subset K$. We suppose, to
get a contradiction, that there exists $y\in \omega (x)-K$. By hypothesis $y$ is not a fixed point and, thus, we can take a local section $I$ containing $
y $ and meeting transversally the trajectory of $y$. Since $y\notin K$ we can assume that $I\cap K=\emptyset $. It is a well-known fact that the
trajectory of $x$ meets $I$ infinitely many times. We consider two consecutive points of intersection $x_{1}=xt_{1}$ and $x_{2}=xt_{2}$ with $
x_{1}$,$x_{2}\in I$, $0<t_{1}<t_{2}$ and $x[t_{1},t_{2}]\cap I=\{x_{1}$,$x_{2}\}.$ Then the set $C=x[t_{1},t_{2}]\cup J$, where $J$ is the
subinterval of $I$ bounded by $x_{1}$ and $x_{2}$, is a simple closed curve which, by the Jordan Theorem, decomposes $\mathbb{R}^{2}$ into two connected
components $U$ and $V$. If $U$ is the bounded component then $U$ is either positively or negatively invariant by \cite{Hirsch}. Then, a simple argument involving again the Poincar\'{e}-Bendixson Theorem, leads to the existence of a fixed point in $U$ which, by hypothesis, belongs to $K$. Now, the intersection of $K$ with $C$ is empty, which implies that $K\subset U\cup V$ and, $K$ being connected, that $K\subset U$. If $U$ is negatively invariant, the trajectory $\gamma $ linking $\infty $ with $K$ cannot \ enter in $U$ since the only possibility would be through $J$, which is an exit set. This makes it impossible that $\omega(\gamma )\subset K$ and we get a contradiction with the hypothesis. If $U$ is positively invariant then an easy argument shows that $y\in\omega(\gamma)$ in contradiction with the assumption. This proves that $\omega (x)\subset K$ for every $x\in \mathbb{R}^{2}$ and, as a consequence, $K$ is a globally attracting set. Since $K$ is isolated, by Theorem~\ref{thm1} $K$ must be stable, i.e. a global attractor. This establishes the implication $2.\Rightarrow 1.$ The converse implication is straightforward.

\section{Attractors, repellers and bounded orbits}

We present in this section a result which gives a readily testable sufficient condition for a planar compactum to be either an attractor or a repeller.

\begin{theorem}\label{thm4}
Let $K$ be an isolated invariant continuum of a flow $\varphi $ in $\mathbb{R}^{2}$. Suppose that there is a closed disk $D$ containing $K$ in its
interior such that there are no fixed points in $D-K$ and that there is an orbit $\gamma $ completely contained in $D-K$. Then $K$ is either an attractor or a repeller. Moreover, $K$ has trivial shape.
\end{theorem}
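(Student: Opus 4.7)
The plan is to argue by contradiction via the Ura-Kimura-Egawa trichotomy. I will suppose that $K$ is neither an attractor nor a repeller and aim to derive a contradiction from the bounded orbit $\gamma$ together with the absence of fixed points in $D-K$. By Guti\'errez's theorem we may assume the flow is smooth. Since $\gamma\subset D$, the sets $\omega(\gamma)$ and $\omega^{*}(\gamma)$ are non-empty, compact, connected, invariant subsets of $D$. The Poincar\'{e}-Bendixson theorem, combined with the hypothesis that every fixed point in $D$ lies in $K$, will force each of these limit sets into one of three categories: a subset of $K$; a periodic orbit contained in $D-K$ whose bounded complementary region must contain a fixed point and therefore enclose $K$ (otherwise a forbidden fixed point of $D-K$ would appear); or a polycycle whose connecting regular orbits either lie in $K$ or can serve as a replacement for $\gamma$. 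After this reduction I may assume $\omega(\gamma)$ and $\omega^{*}(\gamma)$ are each contained in $K$ or are periodic orbits surrounding $K$.

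The next step is to extract, in every case, a Jordan curve $J\subset D$ enclosing $K$ in a bounded region $B\subset\mathrm{int}(D)$: a periodic orbit around $K$; or the homoclinic loop $\overline{\gamma}$ when $\omega(\gamma)=\omega^{*}(\gamma)=\{p\}$ for some fixed point $p\in K$; or, in the heteroclinic case, $\gamma$ joined with an arc of $K$ connecting its two limiting fixed points (using connectedness of $K$). From $J$ I will deduce that $K$ has trivial shape: a bounded component $U$ of $\mathbb{R}^{2}-K$ would be forced inside $B\subset\mathrm{int}(D)$, would be flow-invariant and fixed-point free, and a further Poincar\'{e}-Bendixson argument applied inside $U$ would manufacture a periodic orbit enclosing a forbidden fixed point of $D-K$.

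Finally, I will treat $J$ as the boundary of a trapping region for the flow near $K$. For a periodic-orbit $J$ the flow crosses any small thickening of $J$ with a uniform transverse direction; in the homoclinic or heteroclinic cases I plan to use a nearby isolating block of $K$ together with transversal-section arguments analogous to those appearing in the proof of Theorem~\ref{thm3} to achieve the same effect. Once this is in place, every orbit in $B-K$ must have its $\omega$- or $\omega^{*}$-limit in $K$, so $K$ is attracting or repelling from $B$; combined with the isolation hypothesis, this makes $K$ an attractor or a repeller (via Theorem~\ref{thm1} or its time-reversed dual), contradicting the starting assumption. I expect the main obstacle to be the heteroclinic case, in which $J$ carries two fixed points of $K$ on it and the uniform-transverse-direction argument is not immediate; the delicate part will be choosing an isolating block compatible with $\gamma$ and controlling the flow on local transversals near the limiting fixed points of $\gamma$.
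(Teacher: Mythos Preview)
Your plan coincides with the paper's in the easy case: once a periodic orbit $J\subset D-K$ surrounding $K$ is available, restricting to the closed disk it bounds and invoking Theorem~\ref{thm3} finishes the argument. The gap is in the cases where both $\omega(\gamma)$ and $\omega^{*}(\gamma)$ meet $K$, which you try to handle directly.

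Your handling of those cases does not go through. After your reduction you allow $\omega(\gamma)\subset K$ to be an arbitrary subset, yet the Jordan-curve construction only treats $\omega(\gamma)=\{p\}$; if $\omega(\gamma)$ is larger, $\overline{\gamma}$ is not a simple closed curve. Even in the genuine homoclinic case $\omega(\gamma)=\omega^{*}(\gamma)=\{p\}$, the curve $J=\overline{\gamma}$ passes through $p\in K$, so $K$ does not lie in the bounded component $B$ of $\mathbb{R}^{2}-J$; and since $K-\{p\}$ need not be connected, part of $K$ may sit on each side of $J$. In the heteroclinic case, a continuum $K$ need not be arcwise connected, so the ``arc of $K$ connecting its two limiting fixed points'' may simply fail to exist. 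Finally, your trapping-region step for a $J$ carrying fixed points is only gestured at; near a fixed point there is no local transversal, so the uniform-transverse-direction reasoning cannot be carried out as you suggest. You yourself flag this as the main obstacle, and it is a real one.

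The paper's key idea, absent from your plan, is to show that the case ``$\omega(\gamma)\cap K\neq\emptyset$ and $\omega^{*}(\gamma)\cap K\neq\emptyset$'' never occurs. One takes an isolating block $N$ for $K$ (a disk with holes) and the boundary circle $C\subset\partial N$ lying in the same complementary component as $\gamma$. Since $\gamma$ exits and re-enters $N$, two consecutive crossings on $C$ together with the subarc $A\subset C$ between them bound a disk not containing $K$; Poincar\'e--Bendixson forces the midpoint of $A$ to exit and return as well, and iterating this bisection yields nested arcs shrinking to a single point of $\partial N$ whose orbit is internally tangent to $N$, contradicting the isolating-block axioms. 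Once this is established, one of $\omega(\gamma)$, $\omega^{*}(\gamma)$ is a periodic orbit enclosing $K$, and the endgame you describe (which is also the paper's) applies cleanly.
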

\proof We can assume again that $\varphi $ is differentiable. Since $\overline{\gamma}\subset D$ we have that $\omega (\gamma )\subset D$ and $\omega ^{\ast
}(\gamma )\subset D$. We start by proving that there exists an orbit $\Gamma$ in $D-K$ satisfying the additional condition that either $\omega (\Gamma )\subset K$ or $\omega ^*(\Gamma )\subset K.$ As a consequence of the Poincar\'{e}-Bendixson Theorem and the hypothesis of the present theorem we have
that either $\omega (\gamma )\cap K\neq \emptyset $ or $\omega (\gamma )$ is a periodic orbit not meeting $K,$ and the same can be said for $\omega
^*(\gamma ).$ If $\omega (\gamma )$ is a periodic orbit not meeting $K$ then $K$ is in its interior and, by the Ura-Kimura Theorem, there exists a
point $x,$ also in the interior of $\omega (\gamma )$, with $\omega(x)\subset K$ or $\omega ^*(x)\subset K$, and the same happens if $\omega ^*(\gamma )$ is a periodic orbit not meeting $K.$ Hence, in both cases $\Gamma $ can be taken as the trajectory of $x$. On the other
hand, we will prove that the possibility that both intersections, $\omega(\gamma )\cap K$ and $\omega ^*(\gamma )\cap K,$ are non-empty can
never happen. Suppose, to get a contradiction, that $\omega (\gamma )\cap K\neq \emptyset $ and $\omega ^*(\gamma )\cap K\neq \emptyset .$ Take
an isolating block $N$ of $K$. By \cite{Conley-Easton} $N$ can be chosen to be a topological closed disk with $i$ holes, one for every bounded component of $\mathbb{R}^{2}-K$. We suppose that $\gamma$ is in the unbounded component $U$ (the argument being only slightly different in the other case) and consider the only circle $C\subset \partial N$ \ which is contained in $U$. Then, there exists a point $x\in C\cap \gamma $ leaving $N$ and returning to $N$ after a
time $t\neq 0$, i.e. such that $xt\in C$ and $x(0,t)\cap N=\emptyset .$ The possibility that the time $t$ be positive or negative is irrelevant in this
construction. Consider the arc $A$ in $C$ with extremes $x$ and $xt$ such that the topological circle $x[0,t]\cup A$ does not contain $K$ in its interior. This arc can be mapped to the unit interval $I=[0,1]$ of the real line by a homeomorphism $h:A\rightarrow I$. If we take the point $x_{1}\in \mathring{A}$ corresponding to the center of $I$ then $x_{1}$ must leave $N$ (in the past or in the future) and return again since, otherwise, the Theorem of Poincar\'{e}-Bendixson would imply the existence of a fixed point in the disk limited by $x[0,t]\cup A$. Hence, we can repeat the operation with $x_{1}[0,t_{1}]\cup A_{1}$, where $A_{1}$ is an arc in $A$ with extremes $x_{1}$ and $x_{1}t_{1}$ and the topological circle $%
x_{1}[0,t_{1}]\cup A_{1}$ does not contain $K$ in its interior. Now take $x_{2}\in $ $\mathring{A}_{1}$ corresponding to the middle point of $h(A_{1})$
and repeat the construction. In this way we obtain a sequence $A\supset A_{1}\supset A_{2}\supset\ldots$ of arcs whose intersection $\bigcap_{i=1}^\infty A_{i}$ consists of one point $p\in \partial N$. The orbit of $p$ defines an internal tangency to $\partial N,$ which is in contradiction with the properties of isolating blocks. We get from this contradiction that either $\omega (\gamma )\cap K=\emptyset $ or $\omega ^*(\gamma )\cap
K=\emptyset $ and, as a consequence, one of the two limits is a periodic orbit. Therefore, it follows from the remarks at the beginning of the proof
that there exists an orbit $\Gamma $ in $D-K$ satisfying the additional condition that either $\omega (\Gamma )\subset K$ or $\omega ^*(\Gamma
)\subset K.$

Suppose that $\omega (\Gamma )\subset K$. Then, $\omega ^*(\Gamma )$ is a periodic orbit containing $K$ in its interior. Let $V$ be the interior of $
\omega ^*(\Gamma )$ and consider the flow restricted to $\overline{V}$. An elementary argument involving local sections again shows that $\omega ^*(\Gamma )$ is a repeller for $\varphi |\overline{V}$ and, as a consequence, the restriction of $\varphi$ to $V$ is a dissipative flow. Then, using an
arbitrary homeomorphism between $V$ and $\mathbb{R}^{2}$ we can define a dissipative flow in $\mathbb{R}^{2}$ conjugated to $\varphi |V$  and
satisfying the conditions of Theorem~\ref{thm3}. We deduce from that theorem that $K$ is an attractor of $\varphi $ whose basin of attraction, $V$, is an open topological disk. Hence, $K$ has trivial shape by \cite{Kapitanski}. In the dual situation (when $\omega ^*(\Gamma )\subset K$ and $\omega (\Gamma )$ is a
periodic orbit containing $K$ in its interior), which could be discussed analogously using the reverse flow, it follows that $K$ is a repeller with trivial shape.

\vspace{3mm}

From Theorem~\ref{thm4} it follows:

\begin{corollary}\label{coro4}
Let $K$ be an isolated invariant continuum of a flow $\varphi $ in $\mathbb{R}^{2}$. Suppose that $K$ contains all the fixed points of $\varphi $ and
that there exists a bounded orbit $\gamma $ in $\mathbb{R}^{2}-K$. Then $K$ is either an attractor or a repeller. Moreover, $K$ has trivial shape.
\end{corollary}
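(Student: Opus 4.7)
The plan is to reduce the corollary directly to Theorem~\ref{thm4} by constructing a suitable closed disk $D$ from the given data.

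First I would observe that the hypothesis ``$K$ contains all fixed points of $\varphi$'' translates immediately into the condition that $\mathbb{R}^2 - K$ contains no fixed points; in particular any closed disk $D$ we choose will satisfy the no-fixed-points-in-$D-K$ requirement of Theorem~\ref{thm4}. So the only real task is to produce a closed disk $D$ that contains $K$ in its interior and inside which the given orbit $\gamma$ lies entirely.

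Next I would use boundedness of $\gamma$. Since $\gamma$ is bounded, its closure $\overline{\gamma}$ is compact, and $K$ is compact by assumption, so the union $K \cup \overline{\gamma}$ is compact. I would then choose any closed disk $D$ (for instance a large Euclidean ball) containing $K \cup \overline{\gamma}$ in its interior; such a $D$ exists because a compact subset of $\mathbb{R}^2$ is contained in the interior of a sufficiently large closed disk. By construction $K \subset \mathring{D}$, $\gamma \subset \overline{\gamma} \subset D$, and $\gamma \cap K = \emptyset$ since $\gamma \subset \mathbb{R}^2 - K$ by hypothesis; hence $\gamma$ is an orbit completely contained in $D - K$.

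Finally I would invoke Theorem~\ref{thm4}: the disk $D$ meets all three of its hypotheses (contains $K$ in its interior, has no fixed points in $D-K$, and contains the orbit $\gamma$ in $D-K$), so the conclusion that $K$ is an attractor or a repeller of trivial shape transfers verbatim. No obstacle really arises here; the only point that deserves a sentence of justification is that the boundedness of $\gamma$ (rather than just its containment in $\mathbb{R}^2 - K$) is precisely what lets us package $\gamma$ inside a compact disk, which is the feature Theorem~\ref{thm4} was engineered to exploit.
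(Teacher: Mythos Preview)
Your proof is correct and follows essentially the same route as the paper: you form the compact set $K\cup\overline{\gamma}$, enclose it in the interior of a closed disk $D$, observe that $D-K$ has no fixed points and contains $\gamma$, and apply Theorem~\ref{thm4}. The paper's argument is identical, only more tersely stated.
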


\proof
The set $K\cup\overline{\gamma}$ is compact and as a consequence there exists a closed disk $D$ such that $K\cup\overline{\gamma}\subset \mathring{D}$. Then, Theorem~\ref{thm4} applies since the bounded orbit $\gamma\subset D-K$, and $D-K$ does not contain fixed points by assumption. 

\vspace{3mm}

\remark
The assumptions about the existence of a disk $D$ such that there is an entire orbit contained in $D-K$ in Theorem~\ref{thm4} and the existence of a bounded orbit in $\mathbb{R}^2-K$ in Corollary~\ref{coro4} are unavoidable. For instance, consider the flow $\varphi$ induced by the linear system
\begin{equation*}
\begin{cases}
\dot{x}=x\\
\dot{y}=-y
\end{cases}
\end{equation*}
The origin $(0,0)$ is a fixed point which is isolated as an invariant set and there are neither fixed points nor other bounded orbits in $\mathbb{R}^2-\{(0,0)\}$. In this case, $\{(0,0)\}$ is a saddle and hence, it is neither an attractor nor a repeller.

As a consequence of Corollary~\ref{coro4} and \cite[Theorem~12]{Barge} we obtain the following dichotomy for
dissipative flows:

\begin{corollary}
Let $K$ be an isolated invariant continuum of a dissipative flow $\varphi $ in $\mathbb{R}^{2}$. Suppose that $K$ contains all the fixed points of $\varphi$, then $K$ has trivial shape and is either an attractor or a repeller. Moreover, if $K$ is a repeller then there exists an attractor $K^*\subset\mathbb{R}^2-K$  which is either a limit cycle or homeomorphic to a closed annulus bounded by two limit cycles.
\end{corollary}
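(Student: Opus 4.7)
The plan is to reduce both assertions to results already collected in the paper. Since $\varphi$ is dissipative on $\mathbb{R}^2$, it admits a global attractor $\mathcal{G}\subset\mathbb{R}^2$, and the natural move is to split the argument according to whether $\mathcal{G}\subset K$ or not.

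If $\mathcal{G}\subset K$, then every $\omega$-limit of $\varphi$ lies in $K$, so $K$ is a connected isolated globally attracting set. Theorem~\ref{thm1} upgrades this directly to asymptotic stability, giving that $K$ is a global attractor; its trivial shape follows because its basin of attraction is all of $\mathbb{R}^2$, in the spirit of the closing lines of the proof of Theorem~\ref{thm4}.

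In the complementary case I would pick a point $y\in\mathcal{G}\setminus K$. Because $\mathcal{G}$ is invariant and compact, the whole trajectory $\gamma(y)$ is contained in $\mathcal{G}$ and hence bounded; because $K$ is invariant and $y\notin K$, this trajectory avoids $K$ altogether. Thus $\gamma(y)$ is a bounded orbit in $\mathbb{R}^2-K$, and, since $K$ already contains every fixed point by hypothesis, Corollary~\ref{coro4} applies verbatim and delivers simultaneously both the dichotomy ``$K$ is an attractor or $K$ is a repeller'' and the triviality of its shape.

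For the moreover statement, assume $K$ is a repeller. Attractors and repellers being disjoint, the global attractor $\mathcal{G}$ sits inside $\mathbb{R}^2-K$ and contains no fixed points of $\varphi$ (all of them belonging to $K$). This is exactly the setting of \cite[Theorem~12]{Barge}, from which I would read off the desired structural description of the attractor $K^*$ as either a limit cycle or a closed annulus bounded by two limit cycles. The only step carrying any real content is the extraction of a bounded orbit in $\mathbb{R}^2-K$ in the second case of the dichotomy; once that orbit is in hand, the conclusion is essentially a concatenation of Theorem~\ref{thm1}, Corollary~\ref{coro4} and \cite[Theorem~12]{Barge}.
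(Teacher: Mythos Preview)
Your argument for the first assertion---the attractor/repeller dichotomy together with trivial shape---is essentially the paper's own: extract a bounded orbit from the global attractor $\mathcal{G}$ outside $K$ and feed it to Corollary~\ref{coro4}, handling separately the degenerate case $\mathcal{G}\subset K$ via Theorem~\ref{thm1}.

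The genuine gap is in the ``moreover'' clause. Your claim that ``attractors and repellers being disjoint, the global attractor $\mathcal{G}$ sits inside $\mathbb{R}^2-K$'' is false. Disjointness holds for an attractor and its \emph{dual} repeller, not for arbitrary attractor/repeller pairs; here $K$, being a compact invariant set, is necessarily contained in the global attractor $\mathcal{G}$ (the maximal compact invariant set of a dissipative flow). Hence $\mathcal{G}\not\subset\mathbb{R}^2-K$, and $\mathcal{G}$ does contain fixed points---precisely those lying in $K$---so you are not in the setting of \cite[Theorem~12]{Barge} and cannot take $K^*=\mathcal{G}$.

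The paper fixes this by passing to the restricted flow $\varphi|\mathcal{G}$: there $K$ is still a repeller, and one takes $K^*$ to be its dual attractor inside $\mathcal{G}$. One then checks that $K^*$ remains an attractor for the ambient flow on $\mathbb{R}^2$ (being an attractor inside an attractor), with basin $\mathbb{R}^2-K$. A further point you omit is the connectedness of $K^*$, which is needed before invoking \cite[Theorem~12]{Barge}; the paper obtains it from the trivial shape of $K$, which forces $\mathbb{R}^2-K$ (the basin of $K^*$) to be connected, whence $K^*$ itself is connected.
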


\begin{proof}
The dissipativeness of $\varphi$ guarantees the existence of a global attractor $K'$ and as a consequence $K\subset K'$. Suppose $K'\neq K$, since otherwise we have nothing to prove. Let $x\in K'-K$, the orbit $\gamma(x)$ is a bounded orbit being contained in the invariant compactum $K'$. Then, Corollary~\ref{coro4} ensures that $K$ is either an attractor or a repeller. This proves the first part of the statement.  

Suppose that $K$ is a repeller and consider the flow $\varphi|K'$, i.e. the restriction of $\varphi$ to the global attractor. The continuum $K$ is also a repeller for $\varphi|K'$ and then there exists an invariant compactum $K^*\subset K'$ such that the pair $(K^*,K)$ is an attractor-repeller decomposition of  $\varphi|K'$. Besides, the invariant compactum $K^*$ is an attractor for $\varphi$ since $K^*$ is an attractor for $\varphi|K'$ and $K'$ is an attractor. The region of attraction of $K^*$ agrees with $\mathbb{R}^2-K$ since $K$ is a repeller and $(K^*,K)$ is an attractor-repeller decomposition of the restriction of $\varphi$ to the global attractor $K'$. Moreover, $\mathbb{R}^2-K$ is connected $K$ being of trivial shape \cite{Borsuk} and hence so is $K^*$ by \cite{Kapitanski} and \cite{Borsuk}. We have proved that $K^*$ is a connected attractor which does not contain fixed points, thus by \cite[Theorem~12]{Barge} it must be either a limit cycle or homeomorphic to a closed annulus bounded by two limit cycles. 
\end{proof}

\thebibliography{C}

%\bibitem{libro1} A. Nemo: \em Fixed point topics\em. SIAM J {\bf 15} (1909),
%620--709.

%\bibitem{art2} Everybody: \em Uncomplete proofs\em.
%Proceedings Universalis {\bf 850} (1982), 91--595.

\bibitem{Alarcon} B. Alarc\'{o}n, V. Gu\'{\i}\~{n}ez and C. Guti\'{e}rrez: \em Planar
embeddings with a globally attracting fixed point\em. Nonlinear Anal. {\bf 69} (2008), 140--150.

\bibitem{Barge} H. Barge and J. M. R. Sanjurjo: \em Unstable manifold, Conley index and fixed points of flows\em. J. Math. Anal. Appl. {\bf 420} (2014), no. 1, 835--851.

\bibitem{Bhatia} N. P. Bhatia and G. P. Szego: \em Stability Theory of Dynamical
Systems\em. Grundlehren der Mat. Wiss. {\bf 16}, Springer, Berlin, 1970.

\bibitem{Borsuk} K. Borsuk: \em Theory of Shape\em. Monografie Matematyczne {\bf 59}, Polish
Scientific Publishers, Warsaw, 1975.

\bibitem{Conley} C. Conley: \em Isolated invariant sets and the Morse index\em. CBMS
Regional Conference Series in Mathematics {\bf 38} (Providence, RI:\ American
Mathematical Society) 1978.

\bibitem{Conley-Easton} C. Conley and R. W. Easton: \em Isolated invariant sets and
isolating blocks\em. Trans. Amer. Math. Soc. {\bf 158} (1971) 35--61.

\bibitem{Easton} R. W. Easton: \em Isolating blocks and symbolic dynamics\em. J. Diff. Equations {\bf 17} (1975) 96--118

\bibitem{Egawa} I. Egawa: \em A remark on the flow near a compact invariant set\em. Proc. Japan Acad. {\bf 49} (1973) 247--251.

\bibitem{Garay} B. M. Garay: \em Uniform persistence and chain recurrence\em. J. Math. Anal. Appl. {\bf 139} (1989) 372--381

\bibitem{Gutierrez} C. Guti\'{e}rrez: \em Smoothing continuous flows on
two-manifolds and {re\-curren\-ces}\em. Ergod. Th. \& Dynam. Sys. {\bf 6} (1986) 17--44.

\bibitem{Hale} J. K. Hale: \em Stability and gradient dynamical systems\em. Rev. Mat. Complut. {\bf 17} (2004) 7--57.

\bibitem{Hirsch} M. W. Hirsch, S. Smale and R. L. Devaney: \em Differential Equations, Dynamical Systems, and an Introduction to Chaos\em. Third Edition. Elsevier/Academic Press, Amsterdam, 2013.

\bibitem{Kapitanski} L. Kapitanski and I. Rodnianski: \em Shape and Morse theory of
attractors\em. Comm. Pure Appl. Math. {\bf 53} (2000) 218--242.

\bibitem{Moron} M. A. Mor\'on, J. J. S\'anchez Gabites and J. M. R. Sanjurjo: \em Topology and dynamics of unstable attractors\em. Fund. Math. {\bf 197} (2007) 239--252.

\bibitem{Palis} J. Palis, W. de Melo: \em Geometric Theory of Dynamical Systems. An introduction\em. Springer-Verlag, New York-Berlin 1982.

\bibitem{Ortega 2} R. Ortega and F. R. Ruiz del Portal: \em Attractors with vanishing
rotation number\em. J. Eur. Math. Soc. {\bf 13} (2011) 1569--1590.

\bibitem{Salamon} D. Salamon: \em Connected simple systems and the Conley index of
isolated invariant sets\em. Trans. Amer. Math. Soc. {\bf 291} (1985) 1--41.

\bibitem{Sanchez-Gabites Transactions} J. J. S\'anchez-Gabites: \em Unstable attractors in
manifolds\em. Trans. Amer. Math. Soc. {\bf 362} (2010) 3563--3589.

\bibitem{Sanjurjo} J. M. R. Sanjurjo: \em On the fine structure of the global attractor of an uniformly persistent flow\em. J. Diff. Equations {\bf 252} (2012) 4886--4897.

\bibitem{Ura-Kimura} T. Ura and I. Kimura: \em Sur le courant exterieur a une region invariante. Theoreme de Bendixson\em. Comm. Mat. Univ. Sancii Pauli {\bf 8} (1960) 23--39.

\endDocument %Notice that this is not the standard \end{document}